\def\mode{PreprintMode}
\def\JournalMode{
	\documentclass[graybox]{svmult}
}
\def\PreprintMode{
	\documentclass{article}
	\pdfoutput=1
}
    \newenvironment{customlegend}[1][]{%
        \begingroup
        \csname pgfplots@init@cleared@structures\endcsname
        \pgfplotsset{#1}%
    }{%
        \csname pgfplots@createlegend\endcsname
        \endgroup
    }%
    \def\addlegendimage{
    \csname pgfplots@addlegendimage\endcsname}
\DeclareFontFamily{U}{mathx}{\hyphenchar\font45}
\DeclareFontShape{U}{mathx}{m}{n}{
      <5> <6> <7> <8> <9> <10>
      <10.95> <12> <14.4> <17.28> <20.74> <24.88>
      mathx10
      }{}
\DeclareSymbolFont{mathx}{U}{mathx}{m}{n}
\DeclareMathAccent{\widecheck}{0}{mathx}{"71}
\definecolor{darkred}{RGB}{139,0,0}   
\definecolor{darkgreen}{RGB}{0,100,0} 
\definecolor{darkmagenta}{RGB}{180,0,180} 
\definecolor{darkblue}{RGB}{0,0,190}
\def\citep#1#2{\cite[{#1}]{#2}}
\newcommand{\bbN}{{\mathbb{N}}}
\newcommand{\bbR}{{\mathbb{R}}}
\newcommand{\bbU}{{\mathbb{U}}}
\newcommand{\bbZ}{{\mathbb{Z}}}
\newcommand{\N}{{\mathbb{N}}}
\newcommand{\Z}{{\mathbb{Z}}}
\DeclareSymbolFont{bbold}{U}{bbold}{m}{n}
\DeclareSymbolFontAlphabet{\mathbbold}{bbold}
\newcommand{\bsa}{{\boldsymbol{a}}}
\newcommand{\bsf}{{\boldsymbol{f}}}
\newcommand{\bsh}{{\boldsymbol{h}}}
\newcommand{\bsk}{{\boldsymbol{k}}}
\newcommand{\bsm}{{\boldsymbol{m}}}
\newcommand{\bsp}{{\boldsymbol{p}}}
\newcommand{\bst}{{\boldsymbol{t}}}
\newcommand{\bsx}{{\boldsymbol{x}}}
\newcommand{\bsy}{{\boldsymbol{y}}}
\newcommand{\bsz}{{\boldsymbol{z}}}
\newcommand{\bsell}{{\boldsymbol{\ell}}}
\newcommand{\bszero}{{\boldsymbol{0}}}
\newcommand{\bsone}{{\boldsymbol{1}}}
\newcommand{\bsgamma}{{\boldsymbol{\gamma}}}
\newcommand{\bsmu}{{\boldsymbol{\mu}}}
\newcommand{\calA}{{\mathcal{A}}}
\newcommand{\calK}{{\mathcal{K}}}
\newcommand{\calM}{{\mathcal{M}}}
\newcommand{\calO}{{\mathcal{O}}}
\newcommand{\calP}{{\mathcal{P}}}
\newcommand{\calS}{{\mathcal{S}}}
\newcommand{\fraku}{{\mathfrak{u}}}
\newcommand{\setu}{{\mathfrak{u}}}
\newcommand{\rme}{{\mathrm{e}}}
\newcommand{\rmi}{{\mathrm{i}}}
\newcommand{\tr}{{\rm tr}}
\newcommand{\rd}{\,\mathrm{d}}
\providecommand{\argmin}{\operatorname*{argmin}}
\newtheorem{theorem}{Theorem}[section]
\numberwithin{equation}{section}
\newcommand{\supp}{{\mathrm{supp}}}
\begin{document}

\ifthenelse{\equal{\mode}{JournalMode}}{
\title*{Comparison of Two Search Criteria for Lattice-based Kernel Approximation}
 \titlerunning{Comparison of $\calS_{n}^* (\bsz)$ and $\calP^*_n (\bsz)$ Used in Lattice-based Algorithms}
\author{Frances Y. Kuo, Weiwen Mo, Dirk Nuyens, Ian H. Sloan, 
Abirami Srikumar}
\institute{
   Frances Y.~Kuo,  Ian H.~Sloan and Abirami Srikumar
    \at School of Mathematics and Statistics, UNSW Sydney, Sydney NSW 2052, Australia \\
    \email{f.kuo@unsw.edu.au,
    i.sloan@unsw.edu.au,
    a.srikumar@student.unsw.edu.au}
    \and
    Weiwen Mo and Dirk Nuyens
    \at Department of Computer Science, KU Leuven, Leuven, Belgium \\
    \email{weiwen.mo@kuleuven.be,
    dirk.nuyens@kuleuven.be}
  }}
  {
  \title{Comparison of Two Search Criteria for Lattice-based Kernel Approximation}
\author{
    Frances Y.~Kuo\footnote{School of Mathematics and Statistics,
        University of New South Wales, Sydney NSW 2052, Australia,
        \texttt{(f.kuo|i.sloan)@unsw.edu.au|a.srikumar@student.unsw.edu.au}}
\and
	Weiwen Mo\footnote{Department of Computer Science, KU Leuven,
        Celestijnenlaan 200A, 3001 Leuven, Belgium,
        \texttt{(weiwen.mo|dirk.nuyens)@kuleuven.be}}
\and
    Dirk Nuyens\footnotemark[2]
\and 
     Ian H.~Sloan\footnotemark[1]
 \and 
    Abirami Srikumar\footnotemark[1]
    }
  }

\maketitle

\abstract{The kernel interpolant in a reproducing kernel Hilbert space is optimal in the worst-case sense among all approximations of a
function using the same set of function values. In
this paper, we compare two search criteria to construct lattice point sets
for use in lattice-based kernel approximation. The first candidate,
$\calP_n^*$, is based on the power function that appears in machine
learning literature. The second, $\calS_n^*$, is a search criterion used
for generating lattices for approximation using truncated Fourier series.
We find that the empirical difference in error between the lattices
constructed using $\calP_n^*$ and $\calS_n^*$ is marginal. The
criterion $\calS_n^*$ is preferred as it is computationally
more efficient and has a proven error bound.}

\section{Introduction} \label{MoSrsec:1}

Kernel interpolation seeks an approximation that interpolates a function~$f$
defined over $[0,1]^d$ at $n$ points (see \cite{KKKN20, ZKH09, ZLH06}). The approximation is formed using the reproducing kernel of a reproducing kernel Hilbert space $H$,
and
is of the form
\begin{align} \label{MoSreq:ker_int}
A_n^* (f) (\bsy) \coloneqq \sum_{k=0}^{n-1} a_k \, K(\bst_k , \bsy)\qquad\mbox{for}\quad \bsy \in [0,1]^d.
\end{align}
Here $K(\cdot,\cdot)$ is the reproducing kernel of $H$ and our $n$ distinct interpolation
points are given by $\bst_k \in [0,1]^d$ for $k=0,\ldots,n-1$. The
kernel interpolant is optimal in the worst-case sense among all approximations that use the same
function values of $f$ for which a proof can be
found in \cite{KKKN20}.

The quality of approximation depends on the choice of the $n$ interpolation
points, which leads us to ponder how we can
obtain a ``good'' set of points to reduce the approximation error. In this
paper, we will be considering \emph{lattice-based kernel approximation},
whereby the $n$ interpolation points form an $n$-point rank-$1$
lattice, i.e., a set of lattice points over $[0,1]^d$ characterised
by a generating vector $\bsz \in \bbU_{n}^{d}$, with
\begin{align}
\bst_k \coloneqq \left\{\frac{k \bsz}{n}\right\} \qquad\mbox{for}\quad k = 0,\ldots,n-1,
 \label{MoSreq:rank1}
\end{align}
where  $\bbU_n \coloneqq \{1 \le z \le n-1: \gcd(z,n) =1 \}$ and
$\{\,\cdot\,\}$ denotes taking the fractional part of each
component in a vector.

We use a \emph{component-by-component} (CBC)  algorithm to construct the
generating vector $\bsz$ to define a ``good'' lattice. A CBC algorithm
constructs $\bsz = (z_1,\ldots,z_d)$ by
selecting successive components $z_j$ from the set~$\bbU_n$ to minimise a
computable error criterion at each dimension, or to satisfy a certain
condition (in the case of ``reconstruction lattices''). It is known that
CBC construction of lattice generating vectors can ensure good error
bounds for integration and approximation in high dimensions, see e.g.,
\cite{CKNS20,CN08,DKS13,GIKV21,KMN22,KSW06,KV19,LM12,Nuy14}.

We consider the weighted Korobov space $H_{d,\alpha,\bsgamma}$ of
$d$-variate, one-periodic $L_2$ functions defined on $[0,1]^d$ with
absolutely converging Fourier series (see Section~\ref{MoSrsec:Prelim}).
Here $\alpha>1/2$ is known as the smoothness parameter. When $\alpha$
is an integer, functions in $H = H_{d,\alpha,\bsgamma}$ have square-integrable
mixed partial derivatives of at most order $\alpha$ in each coordinate.
Further, $\bsgamma := \{\gamma_\fraku\}_{\fraku\subset\N}$ are positive
weights quantifying the relative importance of different
subsets of variables.

The worst-case error for a given approximation algorithm $A_n$ with
respect to the $L_2$-norm is defined as
\begin{align*}
 e^{\rm wor} (A_n;L_2)
 \,\coloneqq\, \sup_{\|f\|_{d,\alpha,\bsgamma} \leq 1} \|f - A_n(f)\|_{L_2 ([0,1]^d)},
\end{align*}
where $\| \cdot \|_{d,\alpha,\bsgamma}$ denotes the Korobov space
norm (see \eqref{MoSreq:norm} below). It is difficult to obtain a
computable form for the worst-case error, hence an upper bound on the
worst-case error has been used as the search criterion for CBC
construction.

Kernel approximation has been applied to the interpolation of scattered
multivariate data by radial basis functions and is a recurrent topic in
machine learning and signal processing \cite{BBC20, SW06}. Given any
interpolation pointset $\Lambda = \{\bst_0,\ldots,\bst_{n-1}\}$ (not
necessarily a lattice), the \emph{power function} $P_\Lambda(\bsy)$ is
defined as the norm of the pointwise error functional \cite{DSW03, S95,
WS93}. When $\Lambda$ is a lattice pointset with generating vector
$\bsz$, the power function is thus exactly the worst-case pointwise error
of our lattice-based kernel approximation, $A^*_n$,
\begin{align} \label{MoSreq:powerfun}
P_{\Lambda}(\bsy) \,\coloneqq\, \sup_{\|f\|_{d,\alpha,\bsgamma}\leq 1} |f(\bsy)-A_n^*(f)(\bsy)|.
\end{align}
It follows easily that
\begin{align} \label{MoSreq:P_criteria}
e^{\rm wor}(A^*_n;L_2)  \leq  \bigg(  \int_{[0,1]^d} P_{\Lambda}(\bsy)^2  \rd \bsy \bigg)^{1/2} =: \calP_n^* (\bsz).
\end{align}
The quantity $\calP_n^* (\bsz)$ is a potential search criterion for
CBC construction. Explicit formulas for $P_{\Lambda}(\bsy)$ and
$\calP^*_n(\bsz)$ can be found in \eqref{MoSreq:xinorm1} and
\eqref{MoSreq:wcemtx} below.

A greedy data-independent method was proposed in \cite{DSW03}
whereby larger and larger point sets $\Lambda$ (not necessarily
lattices) are constructed by including the point which maximises the
power function constructed from the current data set. In their setting,
the rate of convergence of this algorithm depends poorly on dimension.

It appears that \cite{ZLH06} was the first to use lattice points as the
interpolation~set~$\Lambda$. It was shown in \cite[Theorem~3]{ZLH06}
that there exists a generating vector~$\bsz$ such that $\calP_n^* (\bsz) $
converges at the rate of $\calO(n^{-\alpha/2 + 1/4 + \delta})$ where $
\delta>0$. Both \cite{ZLH06} and the subsequent paper
\cite{ZKH09} on the $L_{\infty}$ error of kernel interpolation lack an
explicit CBC construction of a rank-$1$ lattice for kernel approximation.

By the optimality of kernel approximation, we have that
\begin{align*}
 e^{\rm wor} (A_n^*;L_2) \le  e^{\rm wor} (A_n;L_2)
\end{align*}
for any approximation $A_n$ using the same function values of $f$ at the
lattice pointset $\Lambda$. In \cite{CKNS21,KMN22} it was shown that a
truncated trigonometric polynomial approximation $A_n$ using lattice
points satisfies $e^{\rm wor} (A_n;L_2) \le \calS^*_n(\bsz)$, with
$\calS^*_n(\bsz)$ given by \eqref{MoSreq:S} below. Thus
\begin{align*}
e^{\rm wor} (A_n^*;L_2) \leq \min\{\calS^{*}_n(\bsz),\calP^*_n(\bsz)\},
\end{align*}
which offers $\calS^*_n(\bsz)$ as a second choice of search criterion for
the CBC construction for kernel approximation.

It was proved in \cite{CKNS21,KMN22} that a CBC construction based on
$\calS^*_n(\bsz)$ achieves a convergence rate of $\calO(n^{-\alpha/2 +
\delta})$ for $\delta>0$, which is also the best possible convergence rate
for lattice-based algorithms using a full rank-$1$ lattice (see
\cite{BKUV17, KKKN20}). It should be noted that lattice-based
approximation algorithms in general are not optimal, but are half of the
optimal convergence rate~$\calO(n^{-\alpha + \delta})$ in our setting (see \cite{DKU23, NSW04}). Algorithms based on
information from linear functionals or only function values (not
lattice-based) can achieve better rates, however, lattice-based algorithms
are easier and more efficient to implement.

Our investigation finds that using $\calS^*_n(\bsz)$ as the search
criterion is more efficient since a ``fast'' CBC algorithm can be used (see e.g., \cite{Nuy14}). As far as we know, no such fast algorithm
exists for the $\calP^*_n(\bsz)$ criterion. Further $\calS^*_n(\bsz)$ can
be computed accurately for large $n$ using double precision while
$\calP^*_n(\bsz)$ requires higher precision. We also find that the
difference in error measured by $\calP^*_n(\bsz)$ between the lattice
generated by minimising $\calP^*_n(\bsz)$ and the lattice generated by
minimising $\calS^*_n(\bsz)$ is marginal.

The structure of the paper is as follows.
Section~\ref{MoSrsec:Prelim} will detail some necessary background
required in Section~\ref{MoSrsec:P_criterion} for the derivation of the
kernel method upper bound and its computable form. In
Section~\ref{MoSrsec:CBCAlg}, an explicit CBC algorithm using $\calP_n^*
(\bsz)$ as the search criterion is proposed. Finally,
Section~\ref{MoSrsec:Num} provides a numerical comparison between
$\calP_n^* (\bsz)$ and $\calS_n^{*} (\bsz)$ using generating vectors
obtained from their respective CBC algorithms for different parameters.

\section{Preliminaries} \label{MoSrsec:Prelim}

\subsection{Weighted Korobov Spaces}

For $\alpha>\frac{1}{2}$ and positive weight parameters $\bsgamma
\coloneqq \{\gamma_{\setu} \}_{\setu \subset \bbN}$,  we consider the
Hilbert space $H_{d,\alpha,\bsgamma}$ of one-periodic $L_2$ functions
defined on $[0,1]^d$ with absolutely convergent Fourier series 
\begin{align*}
  f(\bsy) &\,=\, \sum_{\bsh\in\bbZ^d} \widehat{f} (\bsh) \, \rme^{2\pi \rmi \bsh\cdot\bsy}
  \quad \text{with} \quad
  \widehat{f} (\bsh) \,\coloneqq\, \int_{[0,1]^d} f(\bsy)\, \rme^{-2\pi \rmi \bsh\cdot\bsy}\,\rd\bsy,
\end{align*}
where $\bsh\cdot\bsy = \sum_{j=1}^d h_j y_j$ denotes the usual dot
product. The norm in $H_{d,\alpha,\bsgamma}$ is defined by
\begin{align}
  \|f\|_{d,\alpha, \bsgamma}^2
  & \coloneqq\, \sum_{\bsh\in\bbZ^d} \big\lvert \widehat{f} (\bsh) \big\rvert ^2\, r_{d,\alpha,\bsgamma}(\bsh) ,  \label{MoSreq:norm} \\
 r_{d,\alpha,\bsgamma} (\bsh) &\coloneqq \, \frac{1}{\gamma_{\supp (\bsh)}}  \prod_{j \in \supp (\bsh)} \lvert h_j \rvert^{2\alpha},  \notag
\end{align}
where $\supp (\bsh)\coloneqq \{1 \leq j \leq d: h_j \neq 0\}.$ The
parameter $\alpha$ characterizes the rate of decay of the Fourier
coefficients in the norm, and for integer $\alpha$ can be considered as a
smoothness parameter which indicates that $f$ has square-integrable mixed
partial derivatives of order $\alpha$ over all possible subsets of
variables. 

The inner product of $H_{d,\alpha,\bsgamma}$ is given by
\begin{align*}
  \left \langle f,g \right \rangle_{d, \alpha,\bsgamma}
  \, \coloneqq \,
  \sum_{\bsh \in \bbZ^d} \widehat{f} (\bsh) \, \overline{\widehat{g} (\bsh)} \, r_{d,\alpha,\bsgamma}(\bsh),
\end{align*}
and the norm is $\| \cdot \|_{d, \alpha,\bsgamma} = \langle \cdot,\cdot
\rangle_{d, \alpha,\bsgamma}^{1/2}$ which is consistent with
\eqref{MoSreq:norm}. Further, $H_{d,\alpha,\bsgamma}$ is a reproducing
kernel Hilbert space with reproducing kernel,
\begin{align*} 
K(\bsx,\bsy) = \sum_{\bsh \in \bbZ^d}
				\frac{ \rme^{2 \pi \rmi \bsh \cdot (\bsx - \bsy)} }{ r_{d,\alpha,\bsgamma}(\bsh)  },
\end{align*}
which satisfies (i) $K(\bsx, \bsy) = K(\bsy, \bsx)$ for all $\bsx, \bsy
\in [0,1]^d$; (ii) $K(\cdot, \bsy) \in H_{d,\alpha,\bsgamma}$ for all
$\bsy \in [0,1]^d$; (iii) $\left \langle f , K(\cdot, \bsy) \right
\rangle_{d,\alpha,\bsgamma} = f(\bsy)$ for all $f \in
H_{d,\alpha,\bsgamma}$ and all $\bsy \in [0,1]^d$. The last property is
known as the \emph{reproducing property}. It should be noted that
$r_{d,\alpha,\bsgamma}(\bsh)~=~r_{d,\alpha, \bsgamma} (-\bsh) $ and
therefore $K(\cdot,\cdot)$ takes only real values.

For integer $q > 1$, we have (see \cite[(24.8.3)]{DLMF}) 
\begin{align} \label{MoSreq:Bern}
\frac{ - (2 \pi \rmi)^{q}}{q!} B_{q}(y)
= \sum_{h \in \bbZ \backslash \{0\}} \frac{\rme^{2 \pi \rmi h y}}{h^{q}} \quad\mbox{for}\quad y \in [0,1].
\end{align}
Hence the kernel can be expressed in terms of periodic Bernoulli polynomials, 
\begin{align*}
K(\bsx,\bsy) =
\sum_{\setu \subseteq \{1:d\}} \gamma_{\setu} \prod_{j \in \setu}
		\left[ (-1)^{\alpha+1} \frac{(2 \pi)^{2 \alpha}}{(2 \alpha)!}
		B_{2 \alpha} \left( \left\{ x_j - y_j \right\} \right) \right],
\end{align*}
where $\{1:d\}$ is the set of integers from 1 to $d$ and and as before, the braces denote taking the fractional part of the input.

\subsection{The kernel interpolant} \label{MoSrsec:KerInt}

We approximate $f \in H_{d,\alpha,\bsgamma}$ by the kernel interpolant of
the form \eqref{MoSreq:ker_int} which interpolates $f$ at $n$ rank-$1$
lattice points given by~\eqref{MoSreq:rank1}, i.e.,
\begin{align} \label{MoSreq:interp1}
A_n^*(f) (\bst_{\ell}) = f(\bst_\ell) \quad \mbox{ for all } \ell= 0, \ldots, n-1.
\end{align}
The coefficients $a_k$, $k=0, \ldots, n-1$ are obtained by combining \eqref{MoSreq:ker_int} and \eqref{MoSreq:interp1} and solving the resulting linear system,
\begin{align} \label{MoSreq:a1}
f(\bst_{\ell}) = \sum_{k=0}^{n-1} a_k\, K(\bst_{k}, \bst_{\ell})
\quad \mbox{ for all } \ell= 0, \ldots, n-1.
\end{align}

To simplify our notation, we define the matrix
\begin{align}\label{MoSreq:K}
\calK  \,\coloneqq\, \begin{bmatrix} K(\bst_{k}, \bst_{\ell}) \end{bmatrix}_{\ell, \, k = 0, \ldots,n-1},
\end{align}
 and the vectors
\begin{align} \label{MoSreq:def_apk}
\bsa \coloneqq \begin{pmatrix} a_0 \\ a_1 \\ \vdots \\ a_{n-1} \end{pmatrix}, \quad
\bsf_{\Lambda} \coloneqq \begin{pmatrix} f(\bst_0) \\ f(\bst_1) \\ \vdots \\ f(\bst_{n-1}) \end{pmatrix}, \quad
\bsk_{\Lambda}(\bsy) \coloneqq \begin{pmatrix}K(\bst_0 , \bsy)\\ K(\bst_1 , \bsy) \\ \vdots \\ K(\bst_{n-1} , \bsy) \end{pmatrix}.
\end{align}
Note that the matrix $\calK$ is circulant and symmetric.

Then \eqref{MoSreq:a1} is equivalent to the following linear system
\begin{align}  \label{MoSreq:a_vec}
\bsf_{\Lambda} = \calK \, \bsa.
\end{align}
If $\calK$ has full rank, then the inverse $\calK^{-1}$ exists and
the solution $\bsa$ to \eqref{MoSreq:a_vec} is unique, i.e.,
\begin{align*} 
\bsa = \calK^{-1} \bsf_{\Lambda},
\end{align*}
and the inverse~$\calK^{-1}$ inherits the circulant structure and symmetry
of  matrix~$\calK$. Both $\bsa$ and $ \calK^{-1}$ can be obtained using
the Fast Fourier Transform (FFT).

An equivalent expression of $A_n^*(f) (\bsy)$ using the defined notation is,
\begin{align} \label{MoSreq:mu_exp}
A_n^*(f) (\bsy) 
= \bsa^{\top} \, \bsk_{\Lambda}(\bsy)
= \bsf_{\Lambda}^{\top} \, \calK^{-1} \, \bsk_{\Lambda}(\bsy).
\end{align}

\subsection[The S criterion]{The $\calS^*_n(\bsz)$ criterion} \label{MoSrsec:Fourier}
The truncated trigonometric polynomial approximation from
\cite{CKNS20, KMN22} is defined as follows. We first truncate the Fourier
expansion of $f$ to a finite index set $\calA_d(M) \,\coloneqq\,
\big\{\bsh\in\bbZ^d : r_{d,\alpha,\bsgamma}(\bsh) \le M \big\}
\subset\bbZ^d$ and then approximate Fourier coefficients,
$\widehat{f}(\bsh)$ for $\bsh\in \calA_d(M)$, using an $n$-point
rank-$1$ lattice rule, i.e.,
\begin{align*} 
 f(\bsy) \,\approx \, \sum_{\bsh\in\calA_d(M)} \left(
 \frac{1}{n} \sum_{k=0}^{n-1} f(\bst_k)\,\rme^{-2\pi \rmi \bsh \cdot \bst_k} \right)
 \rme^{2\pi \rmi \bsh \cdot \bsy}.
\end{align*}
With $M$ chosen to minimise the sum of the truncation and
approximation error bound, it was shown in \cite{CKNS20, KMN22} that an
upper bound on the worst-case error 
is 
\begin{align} \label{MoSreq:S}
\calS_n^* (\bsz) \coloneqq \sqrt{2} \left[ S_{n,d, \alpha, \bsgamma}(\bsz )   \right]^{1/4} = \calO(n^{-\alpha/2+\delta}),\qquad \delta>0,
\end{align}
with
\begin{align*}
S_{n,d, \alpha, \bsgamma}(\bsz )
= \sum_{\bsh\in\bbZ^d} \frac{1}{r_{d,\alpha,\bsgamma}(\bsh)}
  \sum_{\substack{\bsell\in\bbZ^d\setminus\{\bszero\} \\ \bsell\cdot\bsz\equiv_n 0}}
  \frac{1}{r_{d, \alpha, \bsgamma} (\bsh + \bsell)}.
\end{align*} 
The implied constant in \eqref{MoSreq:S} depends on $\alpha$ and the weight parameters $\bsgamma$. 
It is independent of dimension $d$ if $\bsgamma$ satisfies a certain condition.
The above convergence rate applies for both prime and composite $n$. For embedded rules, 
the convergence order is scaled by a logarithmic factor of $n$ (see \cite{KMN22}).

In the case of product weights, $\gamma_{\setu} = \prod_{j \in \setu}
\gamma_j$, a simple, computable expression for  $S_{n,d, \alpha,
\bsgamma}(\bsz )$ found in \cite{DKKS07} is 
\begin{align}\label{MoSreq:S_comp}
S_{n,d, \alpha, \bsgamma}(\bsz ) =
&-\prod_{j=1}^d (1+2\gamma_j^2 \zeta(4\alpha))
+ \frac{1}{n}\prod_{j=1}^d (1+2\gamma_j\zeta(2\alpha))^2\\
&\qquad\qquad + \frac{1}{n}\sum_{k=1}^{n-1} \prod_{j=1}^d
\left( 1+(-1)^{\alpha+1}\gamma_j\frac{(2\pi)^{2\alpha}}{(2\alpha)!}B_{2\alpha}\left(\left\{\frac{k z_j}{n}\right\}\right) \right)^2. \notag
\end{align}

Algorithm~\ref{MoSralg:CBC_four_fft} makes use of an alternative
formula for $S_{n,d, \alpha, \bsgamma}(\bsz)$ for product weights, found
in \cite{CKNS21}. Although this formula looks more complex, it is
mathematically equivalent to \eqref{MoSreq:S_comp} and is more accurate in
lower precision than the algorithm using \eqref{MoSreq:S_comp}. The
matrix-vector multiplications at line~$7$ can be computed using FFT after
reordering the rows and columns of the matrices $\Omega_n$ and $\Psi_n$
into circulant matrices. Thus, the cost of
Algorithm~\ref{MoSralg:CBC_four_fft} is $\calO(d \, n \log n)$ (see e.g., \cite{Nuy14, NC06a, NC06b}). For more details,
such as implementation for other types of weights and embedded lattice
sequences for approximation, the reader is referred to \cite{CKNS20,
CKNS21, KMN22, KSW06}.

\begin{algorithm}[t] \label{MoSralg:CBC_four_fft}
    \SetKwInOut{KwIn}{Input}
    \SetKwInOut{KwOut}{Output}
    \KwIn{The number of points $n \ge 2$;
    parameter $\alpha > 1/2$;
    weights $\{\gamma_j\}_{j\geq 1}$;
    maximum dimension $d$.}
    \KwOut{Generating vector $\bsz$; value of $\calS_n^* (\bsz)$.}

    	Set $S = 0$\\
   	 $\Omega_n = \begin{bmatrix} \omega(z,k) \end{bmatrix}_{z\in \bbU_n, k = 0, \ldots,n-1}  $ with
   		$ \omega(z,k) = (-1)^{\alpha+1} \frac{(2\pi)^{2\alpha}}{(2\alpha)!}B_{2\alpha}\left(\left\{\frac{k \, z}{n}\right\}\right)$ \\
	$\Psi_n = \begin{bmatrix} \left(\omega(z,k)\right)^2 - 2\zeta(2\alpha) \end{bmatrix}_{z\in \bbU_n, k = 0, \ldots,n-1}  $\\
	
	$\bsp_0 = \bsone \cdot \prod_{j=2}^d (1+\zeta(2\alpha)\gamma_j^2) \qquad \qquad$ \tcp{Vector of size $n$}

    \tcc{CBC construction of generating vector $\bsz$}
    \For{$s \leftarrow 1$ \KwTo $d$}{
     $W_{d,s} = \frac{1}{n} \Psi_n\,\gamma_s^2 \, \bsp_{s-1} + \frac{2}{n} \Omega_n \,\gamma_s\, \bsp_{s-1} \quad$  \tcp{$W_{d,s}$ is a vector of size $\lvert \bbU_n \rvert$}
     find $\min W_{d,s}$ and set $z_s = \argmin W_{d,s}$, then $\bsz = [\bsz,z_s]$, $S = S + \min W_{d,s}$\\
     $\bsp_s = (1+\gamma_s\Omega_n(z_s,:))^2 \, .\!* \, \bsp_{s-1}/(1+\zeta(2\alpha)\gamma_{s+1}^2)$
    }
    $\calS_n^* (\bsz) = \sqrt{2} \, S^{1/4}$

    \caption{Fast CBC algorithm using $\calS^*_n(\bsz)$ }
\end{algorithm}

\section[The P criterion]{Formulation of search criterion $\calP^*_n (\bsz)$} \label{MoSrsec:P_criterion}

We begin with a derivation of the upper bound, $\calP_n^* (\bsz)$, on the worst-case approximation error for the kernel interpolant method and an explicit expression for this upper bound. 

\begin{theorem} \label{MoSrthm: intP}
The quantity $\calP_n^*(\bsz)$ defined in \eqref{MoSreq:P_criteria} can be written as
\begin{align}  \label{MoSreq:wce2}
\calP_n^* (\bsz) =  \bigg( K(\bsy,\bsy)
		- \int_{[0,1]^d} \bsk_{\Lambda}(\bsy)^{\top} \calK^{-1} \bsk_{\Lambda}(\bsy) \, \rd \bsy \bigg)^{1/2},
\end{align}
where $K(\cdot,\cdot)$ is the reproducing kernel of $H_{d,\alpha,\bsgamma}$, $\bsk_{\Lambda}$ is defined in \eqref{MoSreq:def_apk} and $\calK^{-1}$ is
the inverse of matrix $\calK$ defined in \eqref{MoSreq:K}. This holds for any reproducing kernel.
\end{theorem}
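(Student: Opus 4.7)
The plan is to express the pointwise error $f(\bsy)-A_n^*(f)(\bsy)$ as a Hilbert space inner product using the reproducing property, apply Cauchy--Schwarz to identify the power function $P_\Lambda(\bsy)$ with the norm of the error representer, expand that squared norm, and finally integrate over $[0,1]^d$.

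First, by \eqref{MoSreq:mu_exp}, I would write $A_n^*(f)(\bsy) = \bsb(\bsy)^\top \bsf_\Lambda$, where $\bsb(\bsy) \coloneqq \calK^{-1} \bsk_\Lambda(\bsy)$ depends only on the lattice and on $\bsy$, \emph{not} on $f$. Using the reproducing property $f(\bst_\ell) = \langle f, K(\cdot,\bst_\ell)\rangle_{d,\alpha,\bsgamma}$ componentwise and $f(\bsy) = \langle f, K(\cdot,\bsy)\rangle_{d,\alpha,\bsgamma}$, I would rewrite the pointwise error as
\begin{align*}
f(\bsy) - A_n^*(f)(\bsy) = \left\langle f,\, \eta_{\bsy}\right\rangle_{d,\alpha,\bsgamma}, \quad \eta_{\bsy}(\cdot) \coloneqq K(\cdot,\bsy) - \sum_{\ell=0}^{n-1} b_\ell(\bsy)\, K(\cdot,\bst_\ell).
\end{align*}

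Second, Cauchy--Schwarz gives $|f(\bsy)-A_n^*(f)(\bsy)| \le \|f\|_{d,\alpha,\bsgamma}\,\|\eta_{\bsy}\|_{d,\alpha,\bsgamma}$, with equality attained (up to a sign) by the unit-norm function $f = \eta_{\bsy}/\|\eta_{\bsy}\|_{d,\alpha,\bsgamma}$ (assuming $\eta_{\bsy} \ne 0$; the trivial case is immediate). Hence the supremum in \eqref{MoSreq:powerfun} is $P_\Lambda(\bsy) = \|\eta_{\bsy}\|_{d,\alpha,\bsgamma}$. Expanding this squared norm by bilinearity and invoking the reproducing property once more to evaluate each inner product via $\langle K(\cdot,\bsu), K(\cdot,\bsv)\rangle_{d,\alpha,\bsgamma} = K(\bsu,\bsv)$, I obtain
\begin{align*}
P_\Lambda(\bsy)^2 = K(\bsy,\bsy) - 2\,\bsb(\bsy)^\top \bsk_\Lambda(\bsy) + \bsb(\bsy)^\top \calK\,\bsb(\bsy).
\end{align*}
Substituting $\bsb(\bsy) = \calK^{-1}\bsk_\Lambda(\bsy)$ and using the symmetry of $\calK$ (and hence of $\calK^{-1}$) collapses both the cross term and the quadratic term to the same expression $\bsk_\Lambda(\bsy)^\top \calK^{-1} \bsk_\Lambda(\bsy)$, yielding $P_\Lambda(\bsy)^2 = K(\bsy,\bsy) - \bsk_\Lambda(\bsy)^\top \calK^{-1} \bsk_\Lambda(\bsy)$.

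Finally, integrating this identity over $\bsy \in [0,1]^d$ and taking the square root delivers \eqref{MoSreq:wce2}; the $K(\bsy,\bsy)$ term may be left under or pulled outside the integral because the Korobov kernel is shift-invariant and $K(\bsy,\bsy) = K(\bszero,\bszero)$ is constant in $\bsy$. Nothing in the argument uses the specific form of the Korobov kernel beyond the reproducing property and symmetry, which is why the theorem holds for any reproducing kernel. I do not anticipate any real obstacle: the only subtle point is verifying that $\bsb(\bsy)$ is independent of $f$ so that the error is genuinely linear in $f$, which is clear from \eqref{MoSreq:mu_exp}; the rest is bookkeeping.
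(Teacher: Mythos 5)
Your proposal is correct and follows essentially the same route as the paper's proof: represent the pointwise error as an inner product with the error representer via the reproducing property, identify $P_\Lambda(\bsy)$ with its norm through the sharpness of Cauchy--Schwarz, expand the squared norm, and collapse the cross and quadratic terms using $\bsb(\bsy)=\calK^{-1}\bsk_\Lambda(\bsy)$ before integrating. Your added remark that $K(\bsy,\bsy)$ is constant for the shift-invariant Korobov kernel (so it may sit outside the integral) is a small but worthwhile clarification that the paper leaves implicit.
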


\begin{proof}
Fix $\bsy\in[0,1]^d$. Define the vector $\bsmu_{\Lambda}(\bsy)\coloneqq \calK^{-1}  \bsk_{\Lambda}(\bsy)$ with the form
$$
\bsmu_{\Lambda}(\bsy) =  \begin{pmatrix} \mu_0(\bsy) & \mu_1(\bsy) & \ldots & \mu_{n-1}(\bsy) \end{pmatrix}^{\top}.
$$
Using \eqref{MoSreq:mu_exp} and the reproducing property of the kernel, we have
\begin{align*}
f(\bsy) - A_n^*(f) (\bsy)
&= f(\bsy) -  \bsf_{\Lambda}^{\top} \bsmu_{\Lambda}(\bsy)
= f(\bsy) - \sum_{k=0}^{n-1} f(\bst_k) \mu_k(\bsy) \\
&= \bigg\langle f ,  K(\cdot, \bsy) -  \sum_{k=0}^{n-1}  K(\cdot, \bst_k) \mu_k(\bsy) \bigg\rangle_{d,\alpha,\bsgamma}.
\end{align*}

Applying the Cauchy--Schwarz inequality, we obtain, 
\begin{align*} 
\lvert f(\bsy) - A_n^*(f) (\bsy)  \rvert
&\le \| f \|_{d,\alpha,\bsgamma } \big \|  K(\cdot, \bsy) -  \sum_{k=0}^{n-1}  K(\cdot, \bst_k) \mu_k(\bsy)  \big\|_{d,\alpha,\bsgamma},
\end{align*}
where equality is attained at $\bsy$ when $f$ and $K(\cdot, \bsy) -  \sum_{k=0}^{n-1}  K(\cdot, \bst_k) \mu_k(\bsy) $ are linearly dependent.
So the power function defined by \eqref{MoSreq:powerfun} is actually 
the norm of the error functional on $H_{d,\alpha,\bsgamma}$ evaluated at $\bsy$, with an equivalent expression given as follows
\begin{align*}
 P_{\Lambda}(\bsy) = \big \|  K(\cdot, \bsy) -  \sum_{k=0}^{n-1}  K(\cdot, \bst_k) \mu_k(\bsy)  \big\|_{d,\alpha,\bsgamma}.
\end{align*}

Now we have
\begin{align} \label{MoSreq:xinorm1}
&P_{\Lambda}(\bsy)^2
= \bigg\langle  K(\cdot, \bsy) -  \sum_{k=0}^{n-1}  K(\cdot, \bst_k) \mu_k(\bsy),
		 K(\cdot, \bsy) -  \sum_{\ell=0}^{n-1}  K(\cdot, \bst_{\ell}) \mu_{\ell}(\bsy) \bigg\rangle_{d,\alpha,\bsgamma} \notag \\	
&= K(\bsy,\bsy) - 2  \sum_{k=0}^{n-1} \mu_{k}(\bsy)  K(\bst_k,\bsy)
		+ \sum_{k=0}^{n-1}  \sum_{\ell=0}^{n-1}   \mu_{k}(\bsy)	 \mu_{\ell}(\bsy) K(\bst_k,\bst_{\ell}).
\end{align} 

Since $\bsk_{\Lambda}(\bsy) = \calK \bsmu_{\Lambda}(\bsy)$, we have
\begin{align} \label{MoSreq:mu_k}
K(\bst_{\ell}, \bsy) = \sum_{k=0}^{n-1} \mu_k(\bsy) K(\bst_{\ell}, \bst_{k})
 \quad \mbox{for } \ell = 0, 1, \ldots, n-1.
\end{align}
We continue forward by substituting \eqref{MoSreq:mu_k} into \eqref{MoSreq:xinorm1} and simplifying,
\begin{align*} 
P_{\Lambda}(\bsy)^2
&=  K(\bsy,\bsy) - \sum_{k=0}^{n-1} \mu_{k}(\bsy)  K(\bst_k,\bsy)
=  K(\bsy,\bsy) - \bsk_{\Lambda}(\bsy)^{\top} \calK^{-1} \bsk_{\Lambda}(\bsy),
\end{align*}
as required.
\end{proof}

The following theorem expands on the previous theorem by providing a computable expression for $\calP_n^* (\bsz)$ for $\alpha >1/2$ and some given weight parameters $\bsgamma$. 
Note expression \eqref{MoSreq:wcemtx} has been derived in
\textup{\cite[Lemma 2]{ZLH06}}. 

\begin{theorem}
Given $\alpha > 1/2$ and weight parameters $\bsgamma\coloneqq \{\gamma_\fraku\}_{\fraku\subset\N}$, an equivalent expression for $\calP^*_n (\bsz)$ is given by
\begin{align} \label{MoSreq:wcemtx}
\calP^*_n (\bsz)
 = \bigg( \sum_{\setu \subseteq \{1:d\}}  \gamma_{\setu}  [2 \zeta(2\alpha)]^{|\setu|} - \tr(\calK^{-1} \calM) \bigg)^{1/2},
\end{align}
where $\zeta(\cdot)$ is the Riemann zeta function, matrix~$\calK$ is
defined in \eqref{MoSreq:K} and the elements of the symmetric and
circulant matrix $\calM \in \bbR^{n \times n}$ are given by
\begin{align} \label{MoSreq:exp_mlk}
\calM_{\ell,k} &\coloneqq  \int_{[0,1]^d} K(\bst_\ell, \bsy)\,  K(\bst_k, \bsy)  \, \rd \bsy \\
&\,= 
\sum_{\setu \subseteq
\{1:d\}} \gamma_{\setu}^2 \prod_{j \in \setu}
		\left[ - \frac{(2 \pi)^{4 \alpha}}{(4 \alpha)!}
		B_{4 \alpha} \left( \left\{\frac{(\ell - k)z_j }{n}\right\}
\right) \right]. \nonumber
\end{align}
\end{theorem}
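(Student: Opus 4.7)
The starting point is \RefThm{MoSrthm: intP}, which gives
$\calP_n^*(\bsz)^2 = K(\bsy,\bsy) - \int_{[0,1]^d} \bsk_\Lambda(\bsy)^\top \calK^{-1} \bsk_\Lambda(\bsy)\,\rd\bsy$.
(Note that $K(\bsy,\bsy)$ is in fact independent of $\bsy$ by shift-invariance of the kernel, so no outer integral is needed.) The plan is to evaluate each of the two pieces separately and then identify them with the two terms on the right-hand side of \eqref{MoSreq:wcemtx}.

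For the constant term $K(\bsy,\bsy)$, I would plug $\bsx = \bsy$ into the Fourier representation of the kernel to obtain $K(\bsy,\bsy) = \sum_{\bsh \in \bbZ^d} 1/r_{d,\alpha,\bsgamma}(\bsh)$, then partition $\bbZ^d$ according to $\supp(\bsh) = \setu$, factor the product over coordinates, and use $\sum_{h \in \bbZ \setminus \{0\}} |h|^{-2\alpha} = 2\zeta(2\alpha)$ to reach $\sum_{\setu \subseteq \{1:d\}} \gamma_\setu\, [2\zeta(2\alpha)]^{|\setu|}$.

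For the integral term, I would expand the quadratic form entrywise to get
$\int \bsk_\Lambda(\bsy)^\top \calK^{-1} \bsk_\Lambda(\bsy)\,\rd\bsy = \sum_{\ell,k=0}^{n-1}(\calK^{-1})_{\ell,k}\,\calM_{\ell,k}$
where $\calM_{\ell,k}$ is defined by the first line of \eqref{MoSreq:exp_mlk}. The symmetry $\calM_{\ell,k} = \calM_{k,\ell}$ (by swapping the dummy indices in the integrand) together with the symmetry of $\calK^{-1}$ allows me to recognize this double sum as $\tr(\calK^{-1}\calM)$. The circulant structure of $\calM$ follows from the lattice shift-invariance $\bst_\ell - \bst_k = \{(\ell-k)\bsz/n\}$ (mod $1$), which is inherited by the kernel.

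The remaining task is to derive the explicit Bernoulli form of $\calM_{\ell,k}$. I would substitute the Fourier series of $K$ into both factors of $K(\bst_\ell,\bsy)K(\bst_k,\bsy)$, integrate termwise over $[0,1]^d$, and use orthogonality of the complex exponentials to collapse the double Fourier sum to a single sum over $\bsh$ with $\bsh' = -\bsh$. Splitting again by $\supp(\bsh) = \setu$, this reduces to the one-dimensional evaluation of $\sum_{h\neq 0} e^{2\pi\rmi h y}/|h|^{4\alpha}$ with $y = \{(\ell-k)z_j/n\}$, to which I apply \eqref{MoSreq:Bern} with $q = 4\alpha$. The only real care is the sign: since $\alpha$ is an integer in this representation, $(2\pi\rmi)^{4\alpha} = (2\pi)^{4\alpha}$, which yields the $-\,(2\pi)^{4\alpha}/(4\alpha)!\,B_{4\alpha}(\cdot)$ factor in \eqref{MoSreq:exp_mlk}. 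The main (minor) obstacle is bookkeeping the signs and the $|h|$ versus $h$ distinction in the Bernoulli identity; everything else is routine Parseval-style manipulation.
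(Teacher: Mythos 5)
Your proposal is correct and follows essentially the same route as the paper: evaluate $K(\bsy,\bsy)$ via the support decomposition and $\sum_{h\neq 0}|h|^{-2\alpha}=2\zeta(2\alpha)$, identify the integrated quadratic form with $\tr(\calK^{-1}\calM)$ using the symmetry of $\calM$, and obtain the Bernoulli form of $\calM_{\ell,k}$ by termwise integration of the double Fourier series, orthogonality ($\bsh'=-\bsh$), and the identity \eqref{MoSreq:Bern} with $q=4\alpha$. Your observation that $K(\bsy,\bsy)$ is constant by shift-invariance and your sign check $(2\pi\rmi)^{4\alpha}=(2\pi)^{4\alpha}$ are both consistent with the paper's treatment.
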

\begin{proof}
From \eqref{MoSreq:def_apk} and the definition of matrix~$\calM$, we can write the integral in \eqref{MoSreq:wce2} as
\begin{multline}
 \int_{[0,1]^d} \bsk_{\Lambda}(\bsy)^{\top} \calK^{-1} \bsk_{\Lambda}(\bsy) \, \rd \bsy
 = \sum_{\ell=0}^{n-1}  \sum_{k=0}^{n-1}(\calK^{-1})_{\ell,k} \int_{[0,1]^d} K(\bst_\ell, \bsy) K(\bst_k, \bsy)  \, \rd \bsy	\notag\\
 =  \sum_{\ell=0}^{n-1}  \sum_{k=0}^{n-1} (\calK^{-1})_{\ell,k} \, \calM_{\ell,k}  = \tr(\calK^{-1} \calM),
\end{multline}
where we used the obvious symmetry of matrix $\calM$.
We also have that
$$K(\bsy, \bsy) = \sum_{\bsh\in \Z^d} \frac{1}{r_{d,\alpha,\bsgamma}(\bsh)}
= \sum_{\setu\subseteq \{1:d\}}\gamma_\setu \prod_{j\in\setu}\sum_{h\in\Z\backslash\{0\}}\frac{1}{|h|^{2\alpha}}
=\sum_{\setu \subseteq \{1:d\}}  \gamma_{\setu}  [2 \zeta(2\alpha)]^{|\setu|},$$
which can be combined with the above to achieve \eqref{MoSreq:wcemtx}.

An explicit expression for $\calM_{\ell,k}$ can be obtained as follows
\begin{align} \label{MoSreq:Mcirc}
\calM_{\ell,k}
&=  \int_{[0,1]^d}
			 \bigg( \sum_{\bsh \in \bbZ^d} \frac{\rme^{2 \pi \rmi \bsh \cdot (\bst_{\ell}-\bsy)}}{ r_{d,\alpha,\bsgamma} (\bsh) } \bigg)
			  \bigg( \sum_{\bsh' \in \bbZ^d} \frac{\rme^{2 \pi \rmi \bsh' \cdot (\bst_k-\bsy)}}{ r_{d,\alpha,\bsgamma} (\bsh') } \bigg)
			   \, \rd \bsy  \notag\\
&= 	\sum_{\bsh \in \bbZ^d} \sum_{\bsh' \in \bbZ^d}   \frac{\rme^{2 \pi \rmi \bsh \cdot \bst_{\ell}}}{ r_{d,\alpha,\bsgamma} (\bsh) }
		 \frac{\rme^{2 \pi \rmi \bsh' \cdot \bst_{k}}}{ r_{d,\alpha,\bsgamma} (\bsh') }
		\underbrace{ \int_{[0,1]^d} \rme^{- 2 \pi \rmi (\bsh+\bsh') \cdot \bsy}  \, \rd \bsy}_{=\delta_{\bsh,- \bsh'}}  \notag\\
&=  \sum_{\bsh \in \bbZ^d} \frac{1}{[ r_{d,\alpha,\bsgamma} (\bsh)]^2} \rme^{2 \pi \rmi \bsh \cdot \bsz (\ell- k)/n}.
\end{align}
Simplifying the expression above further results in 
\begin{align*}
\calM_{\ell,k}
&= \!\!\! \sum_{\bsh \in \bbZ^d} \gamma_{\supp(\bsh)}^2 \!\!\! \prod_{j \in \supp(\bsh)} \!\!\!\! \frac{ \rme^{2 \pi \rmi h_j z_j (\ell- k)/n}}{\lvert  h_j\rvert^{4 \alpha}}
=\!\!\! \sum_{\setu \subseteq \{1:d\}} \gamma_{\setu}^2 \prod_{j \in \setu}  \sum_{h \in \bbZ \backslash \{0\}}
		\!\!\!\! \frac{ \rme^{2 \pi \rmi h z_j (\ell- k)/n}}{\lvert  h\rvert^{4 \alpha}} \notag \\
&= \sum_{\setu \subseteq \{1:d\}} \gamma_{\setu}^2 \prod_{j \in \setu}
		\left[ - \frac{(2 \pi)^{4 \alpha}}{(4 \alpha)!}
		B_{4 \alpha} \left( \left\{\frac{(\ell - k)z_j }{n}\right\} \right) \right].
\end{align*}
where for the last equality we used \eqref{MoSreq:Bern}.

We see from \eqref{MoSreq:Mcirc} that each column of $\calM$ is just the previous column shifted down by one element and hence the matrix $\calM$ is circulant.
\end{proof}

As noted in \cite{ZLH06}, evaluating $\left[\calP_n^* (\bsz)\right]^2$ by
computing the integral~\eqref{MoSreq:exp_mlk} causes catastrophic
round-off errors when $\alpha >1$ and $d>2$. In \cite[Theorem~1]{ZLH06},
an alternative form of $\calP_n^* (\bsz)$ is proposed. The authors
approximate $\calP_n^* (\bsz)$ by truncating the infinite sums involved
with truncation error of $\calO(n^{-2\alpha})$. Instead, we will evaluate
$\calP_n^* (\bsz)$ in arbitrary precision using
Algorithm~\ref{MoSralg:CBC_ker_fft} below.

\section[CBC with P criterion]{CBC construction based on $\calP^*_n (\bsz)$} \label{MoSrsec:CBCAlg}

\begin{algorithm}[t] \label{MoSralg:CBC_ker_fft}
    \SetKwInOut{KwIn}{Input}
    \SetKwInOut{KwOut}{Output}

    \KwIn{The number of points $n \ge 2$;
    parameter $\alpha > 1/2$;
    weights $\{\gamma_j\}_{j\geq 1}$;
    maximum dimension $d$.}
    \KwOut{Generating vector $\bsz$; value of $\calP_n^* (\bsz)$.}

   \tcc{Initialization: first column of $\calK$ and $\calM$ at dimension $1$}
    Set $z_1 = 1$

    $\bsk_{1} = \begin{bmatrix} k_{1}(\ell) \end{bmatrix}_{\ell = 0}^{n-1}, $ \, \,
    $ k_{1}(\ell) =1 +  (-1)^{\alpha + 1} \frac{(2 \pi)^{2\alpha}}{(2\alpha)!}  \gamma_{1}  B_{2\alpha} \left( \left\{\frac{\ell }{n}\right\} \right)$

    $\bsm_{1} = \begin{bmatrix} m_{1}(\ell) \end{bmatrix}_{\ell = 0}^{n-1}, $ \, \,
    $ m_{1}(\ell) =1 -
		 \frac{(2 \pi)^{4 \alpha}}{(4 \alpha)!}  \gamma_{1}^2
		B_{4 \alpha} \left( \left\{\frac{\ell }{n}\right\} \right) $

    \tcc{CBC construction of generating vector $\bsz$}
    \For{$s \leftarrow 2$ \KwTo $d$}{
    with $z_1$, \ldots, $z_{s-1}$ fixed,  $\bsz = [ z_1, \ldots, z_{s-1} ]$
    
    \For{$z$ \rm{in} $\bbU_n = \{1 \le z \le n-1: \gcd(z,n) =1 \}$}{

    $\bsk_{s} = \begin{bmatrix} k_{s}(\ell) \end{bmatrix}_{\ell = 0}^{n-1}, $ \, \,
    $ k_{s}(\ell) = k_{s-1}(\ell)  \big[ 1 +   (-1)^{\alpha + 1} \frac{(2 \pi)^{2\alpha}}{(2\alpha)!} \gamma_s B_{2\alpha} \left( \left\{\frac{\ell z}{n}\right\} \right)  \big]$

    $\bsm_{s} = \begin{bmatrix} m_{s}(\ell) \end{bmatrix}_{\ell = 0}^{n-1}, $ \, \,
    $ m_{s}(\ell) = m_{s-1}(\ell)   \big[1 -
		 \frac{(2 \pi)^{4 \alpha}}{(4 \alpha)!} \gamma_{s}^2
		B_{4 \alpha} \left( \left\{\frac{\ell z }{n}\right\} \right)  \big]$
		
 	$\widehat{\bsk}_s = {\rm FFT}(\bsk_{s})$
 	
 	$\widehat{\bsm}_s = {\rm FFT}(\bsm_{s})$

	$T_s(z) = {\rm sum}(\widehat{\bsm}_s \, ./  \, \widehat{\bsk}_s )$
    }

    find $\max_{z \in \bbU_n} T_s(z)$ and set $z_s$ to be the maximiser, so $\bsz = [ \bsz , z_s]$
    }
	$ \calP_n^* (\bsz) = \left[\prod_{j=1}^d (1 + \gamma_j [2 \zeta(2 \alpha)] )  - T_d(z_d)\right]^{1/2}$
    \caption{CBC algorithm using $\calP_{n}^*(\bsz)$ as error criterion}
\end{algorithm}

In this section, we consider product weights, i.e., $\gamma_{\setu} =
\prod_{j \in \setu} \gamma_j$, for $\setu \subseteq \{1:d\}$. Since
$\sum_{\setu \subseteq\{1:d\}} \prod_{j \in \setu} a_j=\prod_{j=1}^d (1 +
a_j) $, the elements of matrices~$\calK$ and $\calM$ become
\begin{align*}
\calK_{\ell,k} 
= \prod_{j=1}^d \left( 1 + (-1)^{\alpha+1} \frac{(2 \pi)^{2 \alpha}}{(2 \alpha)!} \gamma_j
		B_{2 \alpha} \left( \left\{ \frac{(\ell - k)z_j}{n} \right\} \right) \right),
\end{align*}
and
\begin{align*}
\calM_{\ell,k} &= \prod_{j=1}^d \left( 1  - \frac{(2 \pi)^{4 \alpha}}{(4 \alpha)!} \gamma_j^2
		B_{4 \alpha} \left( \left\{\frac{(\ell - k)z_j }{n}\right\} \right)\right) .
\end{align*}

As $\calK$ and $\calM$ are circulant, the eigenvalues of both matrices can be obtained by applying the discrete Fourier transform (DFT) to their first columns, i.e.,
\begin{align*}
\widehat{k}_{\ell} = \sum_{j=0}^{n-1} \calK_{j,0} \, \rme^{- 2 \pi \rmi \frac{\ell j}{n}} \, \text{ and } \,
\widehat{m}_{\ell} =  \sum_{j=0}^{n-1} \calM_{j,0} \, \rme^{- 2 \pi \rmi \frac{\ell j}{n}}
 \,\mbox{ for } \, \ell = 0,1, \ldots, n-1.
\end{align*}
The matrix $\calK^{-1} \calM$ is also a circulant matrix with eigenvalues~$\widehat{m}_{\ell}/\widehat{k}_{\ell}$ where $\ell = 0, \dots, n-1$.
Thus, $\calP_{n}^*(\bsz)$, given by \eqref{MoSreq:wcemtx}, can be computed using,
\begin{align*}
\calP_{n}^*(\bsz) = \bigg[ \prod_{j=1}^d (1 + \gamma_j [2 \zeta(2 \alpha)]) - \sum_{\ell = 0}^{n-1} \frac{\widehat{m}_{\ell}}{\widehat{k}_{\ell}} \bigg]^{1/2},
 \end{align*}
 leading to CBC Algorithm \ref{MoSralg:CBC_ker_fft} with computational cost $\calO(n^2 \, d \log n)$.

\section{Numerical results} \label{MoSrsec:Num}
We finally present the results of numerical experiments using the $\calP_n^*(\bsz)$ and $\calS_n^{*}(\bsz)$ criteria. We plot the two upper bounds for different choices of weight parameters of the product form~$\gamma_j >0$ for both $\alpha =1$ and $\alpha =2$. 
The weight parameters include a scaling factor of  $\pi^{-2\alpha}$ so that the computable expressions for both criteria only consist of rational numbers. This allowed us to easily verify numerical results by calculating test cases analytically.
 Note that the generating vectors, $\bsz_{\rm cbc}^{\calS}$ and $\bsz_{\rm cbc}^{\calP}$,
are constructed by each individual CBC algorithm for each $n$.

\pgfplotsset{ tick label style={font=\small}, label style={font=\small},
legend style={font=\scriptsize}, every axis/.append style={ line
width=0.8 pt, tick style={line width=0.3pt}} }

\begin{figure}[t]
    \centering
    \begin{tikzpicture}

    \begin{customlegend}[legend columns=4, legend style={at={(4.2,-6)},anchor=north, align=center},,
            legend entries={$\calS_n^{*} (\bsz_{\rm cbc}^{\calS})$ ,
                            $\calP_n^* (\bsz_{\rm cbc}^{\calP})$,
                            }]
        \addlegendimage{color= blue, mark=o, dashed, mark options=solid, line width=0.8 pt,   line legend}
            \addlegendimage{color= magenta, mark=+, line width=0.8 pt,   line legend} 
    \end{customlegend}
    
    \begin{groupplot}[
            group style={
        group name=my plots, 
        group size=2 by 2,
        x descriptions at=edge bottom,
        ylabels at=edge left,
        },
                footnotesize,
                width=6cm,
                height=5.5cm,
                xlabel= {$n$},
                ylabel= {values of upper bounds},
            ]
       
       \nextgroupplot[xmode = log, ymode=log, grid = major, title = { $\gamma_j = j^{-3 \alpha} / \pi^{2 \alpha}$}]

	\addplot[color=blue,mark=o, dashed, mark options=solid] table{Num/Num_S_zScbc_2_10_invj3.txt};
	
	\pgfplotstableread{Num/Num_S_zScbc_2_10_invj3.txt}\tableA
	\pgfplotstablecreatecol[linear regression]{regression}{\tableA}
	\xdef\slopeA{\pgfplotstableregressiona}

	 \node[pin={[pin distance=0.1cm] -90:{\scriptsize $\textcolor{blue}{\pgfmathprintnumber[fixed, precision=3]{\slopeA}}$}},fill=blue,circle,scale=0.1,]
	(aninnernode) at (8,-3.2) {};

	\addplot[color=magenta,mark=+,]  table {Num/Num_P_zPcbc_2_10_invj3.txt};
	
	\pgfplotstableread{Num/Num_P_zPcbc_2_10_invj3.txt}\tableB
	\pgfplotstablecreatecol[linear regression]{regression}{\tableB}
	\xdef\slopeB{\pgfplotstableregressiona}
	\node[ pin={[pin distance=0.1cm] 90:{\scriptsize $\textcolor{magenta}{\pgfmathprintnumber[fixed, precision=3]{\slopeB}}$}}, fill=magenta,circle,scale=0.1,]
	(aninnernode) at (9.2,-2.8) {};
	\node[ pin={ [pin distance=0.1cm] -90:{\scriptsize $\alpha = 1$ }}, fill=blue,circle,scale=0.1,] (aninnernode) at (9.5,-4.5) {};

	\addplot[color=blue, mark=o, dashed, mark options=solid]  table{Num/Num_S_zScbc_4_10_invj6.txt};
	
	\pgfplotstableread{Num/Num_S_zScbc_4_10_invj6.txt}\tableC
	\pgfplotstablecreatecol[linear regression]{regression}{\tableC}
	\xdef\slopeC{\pgfplotstableregressiona}
	\node[ pin={ [pin distance=0.1cm] 90:{\scriptsize $\textcolor{blue}{\pgfmathprintnumber[fixed, precision=3]{\slopeC}}$}}, fill=blue,circle,scale=0.1,]
	(aninnernode) at (8,-8.4) {};

	\addplot[color=magenta,mark=+,]  table{Num/Num_P_zPcbc_4_10_invj6.txt};
	
	\pgfplotstableread{Num/Num_P_zPcbc_4_10_invj6.txt}\tableD
		\pgfplotstablecreatecol[linear regression]{regression}{\tableD}
	\xdef\slopeD{\pgfplotstableregressiona}
	\node[ pin={ [pin distance=0.1cm] -90:{\scriptsize $\textcolor{magenta}{\pgfmathprintnumber[fixed, precision=3]{\slopeD}}$}}, fill=magenta,circle,scale=0.1,]
	(aninnernode) at (9.2,-10.3) {};
	\node[ pin={ [pin distance=0.1cm] 90:{\scriptsize $\alpha = 2$ }}, fill=blue,circle,scale=0.1,] (aninnernode) at (7,-7) {};

	\nextgroupplot[xmode = log, ymode=log, grid = major, title = { $\gamma_j =  j^{-2}/\pi^{2 \alpha}$}]
	
	\addplot[color=blue,mark=o,  dashed, mark options=solid]  table {Num/Num_S_zScbc_2_10_invj2.txt};
	\pgfplotstableread{Num/Num_S_zScbc_2_10_invj2.txt}\tableE
	\pgfplotstablecreatecol[linear regression]{regression}{\tableE}
	\xdef\slopeE{\pgfplotstableregressiona}
	 \node[pin={[pin distance=0.1cm] -90:{\scriptsize $\textcolor{blue}{\pgfmathprintnumber[fixed, precision=3]{\slopeE}}$}},fill=blue,circle,scale=0.1,]
	(aninnernode) at (8,-2.7) {};

	\addplot[color=magenta,mark=+,]  table {Num/Num_P_zPcbc_2_10_invj2.txt};
	\pgfplotstableread{Num/Num_P_zPcbc_2_10_invj2.txt}\tableF
	\pgfplotstablecreatecol[linear regression]{regression}{\tableF}
	\xdef\slopeF{\pgfplotstableregressiona}
	 \node[pin={[pin distance=0.1cm] 90:{\scriptsize $\textcolor{magenta}{\pgfmathprintnumber[fixed, precision=3]{\slopeF}}$}},fill=magenta,circle,scale=0.1,]
	(aninnernode) at (9.2,-2.0) {};
	\node[ pin={ [pin distance=0.1cm] -90:{\scriptsize $\alpha = 1$ }}, fill=blue,circle,scale=0.1,] (aninnernode) at (9.5,-3.5) {};

	\addplot[color=blue, mark=o,  dashed, mark options=solid]  table{Num/Num_S_zScbc_4_10_invj2.txt};
	\pgfplotstableread{Num/Num_S_zScbc_4_10_invj2.txt}\tableG
	\pgfplotstablecreatecol[linear regression]{regression}{\tableG}
	\xdef\slopeG{\pgfplotstableregressiona}
	\node[pin={[pin distance=0.1cm] -90:{\scriptsize $\textcolor{blue}{\pgfmathprintnumber[fixed, precision=3]{\slopeG}}$}},fill=blue,circle,scale=0.1,]
	(aninnernode) at (8,-6.5) {};

	\addplot[color=magenta,mark=+,]  table{Num/Num_P_zPcbc_4_10_invj2.txt};
	\pgfplotstableread{Num/Num_P_zPcbc_4_10_invj2.txt}\tableH
	\pgfplotstablecreatecol[linear regression]{regression}{\tableH}
	\xdef\slopeH{\pgfplotstableregressiona}
	 \node[pin={[pin distance=0.1cm] 90:{\scriptsize $\textcolor{magenta}{\pgfmathprintnumber[fixed, precision=3]{\slopeH}}$}},fill=magenta,circle,scale=0.1,]
	(aninnernode) at (9.2,-7) {};
	\node[ pin={ [pin distance=0.1cm] 90:{\scriptsize $\alpha = 2$ }}, fill=blue,circle,scale=0.1,] (aninnernode) at (7,-5.3) {};

	\nextgroupplot[xmode = log, ymode=log, grid = major, title = { $\gamma_j = 0.9^{j-1}/\pi^{2 \alpha}$}]
	
	\addplot[color=blue,mark=o,  dashed, mark options=solid]  table {Num/Num_S_zScbc_2_10_expj9.txt};
	\pgfplotstableread{Num/Num_S_zScbc_2_10_expj9.txt}\tableI
	\pgfplotstablecreatecol[linear regression]{regression}{\tableI}
	\xdef\slopeI{\pgfplotstableregressiona}
	\node[pin={[pin distance=0.1cm] -90:{\scriptsize $\textcolor{blue}{\pgfmathprintnumber[fixed, precision=3]{\slopeI}}$}},fill=blue,circle,scale=0.1,]
	(aninnernode) at (8,-0.85) {};

	\addplot[color=magenta,mark=+,]  table {Num/Num_P_zPcbc_2_10_expj9.txt};
	\pgfplotstableread{Num/Num_P_zPcbc_2_10_expj9.txt}\tableJ
	\pgfplotstablecreatecol[linear regression]{regression}{\tableJ}
	\xdef\slopeJ{\pgfplotstableregressiona}
	\node[pin={[pin distance=0.1cm] -90:{\scriptsize $\textcolor{magenta}{\pgfmathprintnumber[fixed, precision=3]{\slopeJ}}$}},fill=magenta,circle,scale=0.1,]
	(aninnernode) at (9.2, 0.34) {};
	\node[ pin={ [pin distance=0.1cm] -90:{\scriptsize $\alpha = 1$ }}, fill=blue,circle,scale=0.1,] (aninnernode) at (9.5,-1.5) {};

	\addplot[color=blue, mark=o,  dashed, mark options=solid]  table{Num/Num_S_zScbc_4_10_expj9.txt};
	\pgfplotstableread{Num/Num_S_zScbc_4_10_expj9.txt}\tableK
	\pgfplotstablecreatecol[linear regression]{regression}{\tableK}
	\xdef\slopeK{\pgfplotstableregressiona}
	\node[pin={[pin distance=0.1cm] -90:{\scriptsize $\textcolor{blue}{\pgfmathprintnumber[fixed, precision=3]{\slopeK}}$}},fill=blue,circle,scale=0.1,]
	(aninnernode) at (8,-4.5) {};

	\addplot[color=magenta,mark=+,]  table{Num/Num_P_zPcbc_4_10_expj9.txt};
	\pgfplotstableread{Num/Num_P_zPcbc_4_10_expj9.txt}\tableL
	\pgfplotstablecreatecol[linear regression]{regression}{\tableL}
	\xdef\slopeL{\pgfplotstableregressiona}
	 \node[pin={[pin distance=0.1cm] 90:{\scriptsize $\textcolor{magenta}{\pgfmathprintnumber[fixed, precision=3]{\slopeL}}$}},fill=magenta,circle,scale=0.1,]
	(aninnernode) at (9.2,-4.4) {};
	\node[ pin={ [pin distance=0.1cm] 90:{\scriptsize $\alpha = 2$ }}, fill=blue,circle,scale=0.1,] (aninnernode) at (7,-3.1) {};

	\nextgroupplot[xmode = log, ymode=log, grid = major, title = {$\gamma_j = 1/\pi^{2 \alpha}$}]
	
	\addplot[color=blue,mark=o,  dashed, mark options=solid]  table {Num/Num_S_zScbc_2_10_unweighted.txt};
	\pgfplotstableread{Num/Num_S_zScbc_2_10_unweighted.txt}\tableM
	\pgfplotstablecreatecol[linear regression]{regression}{\tableM}
	\xdef\slopeM{\pgfplotstableregressiona}
	 \node[pin={[pin distance=0.1cm] -90:{\scriptsize $\textcolor{blue}{\pgfmathprintnumber[fixed, precision=3]{\slopeM}}$}},fill=blue,circle,scale=0.1,]
	(aninnernode) at (8,-0.25) {};

	\addplot[color=magenta,mark=+,]  table {Num/Num_P_zPcbc_2_10_unweighted.txt};
	\pgfplotstableread{Num/Num_P_zPcbc_2_10_unweighted.txt}\tableN
	\pgfplotstablecreatecol[linear regression]{regression}{\tableN}
	\xdef\slopeN{\pgfplotstableregressiona}
	\node[pin={[pin distance=0.1cm] -90:{\scriptsize $\textcolor{magenta}{\pgfmathprintnumber[fixed, precision=3]{\slopeN}}$}},fill=magenta,circle,scale=0.1,]
	(aninnernode) at (9.2,1.2) {};
	\node[ pin={ [pin distance=0.1cm] -90:{\scriptsize $\alpha = 1$ }}, fill=blue,circle,scale=0.1,] (aninnernode) at (9.5,-1) {};

	\addplot[color=blue, mark=o,  dashed, mark options=solid]  table{Num/Num_S_zScbc_4_10_unweighted.txt};
	\pgfplotstableread{Num/Num_S_zScbc_4_10_unweighted.txt}\tableO
	\pgfplotstablecreatecol[linear regression]{regression}{\tableO}
	\xdef\slopeO{\pgfplotstableregressiona}
	\node[pin={[pin distance=0.1cm] -90:{\scriptsize $\textcolor{blue}{\pgfmathprintnumber[fixed, precision=3]{\slopeO}}$}},fill=blue,circle,scale=0.1,]
	(aninnernode) at (8, -4) {};

	\addplot[color=magenta,mark=+,]  table{Num/Num_P_zPcbc_4_10_unweighted.txt};
	\pgfplotstableread{Num/Num_P_zPcbc_4_10_unweighted.txt}\tableP
	\pgfplotstablecreatecol[linear regression]{regression}{\tableP}
	\xdef\slopeP{\pgfplotstableregressiona}
	\node[pin={[pin distance=0.1cm] 90:{\scriptsize $\textcolor{magenta}{\pgfmathprintnumber[fixed, precision=3]{\slopeP}}$}},fill=magenta,circle,scale=0.1,]
	(aninnernode) at (9.2,-3.8) {};
	\node[ pin={ [pin distance=0.1cm] 90:{\scriptsize $\alpha = 2$ }}, fill=blue,circle,scale=0.1,] (aninnernode) at (7,-2.5) {};
    \end{groupplot}

    \end{tikzpicture}
    \caption{The values of $\calS_n^{*} (\bsz_{\rm cbc}^{\calS})$ and $\calP_n^* (\bsz_{\rm cbc}^{\calP})$ 
    with $\bsz_{\rm cbc}^{\calS}$ and $\bsz_{\rm cbc}^{\calP}$ obtained from their respective CBC constructions 
    for different weight parameters for each $n = 2^{10}, 2^{11}, \ldots, 2^{14}$.
    The dimension is set to be $d=10$. 
    The negative numbers in each subplot denote the slopes of lines respectively.}
    \label{MoSrfig:pgf_num_d10}
\end{figure}
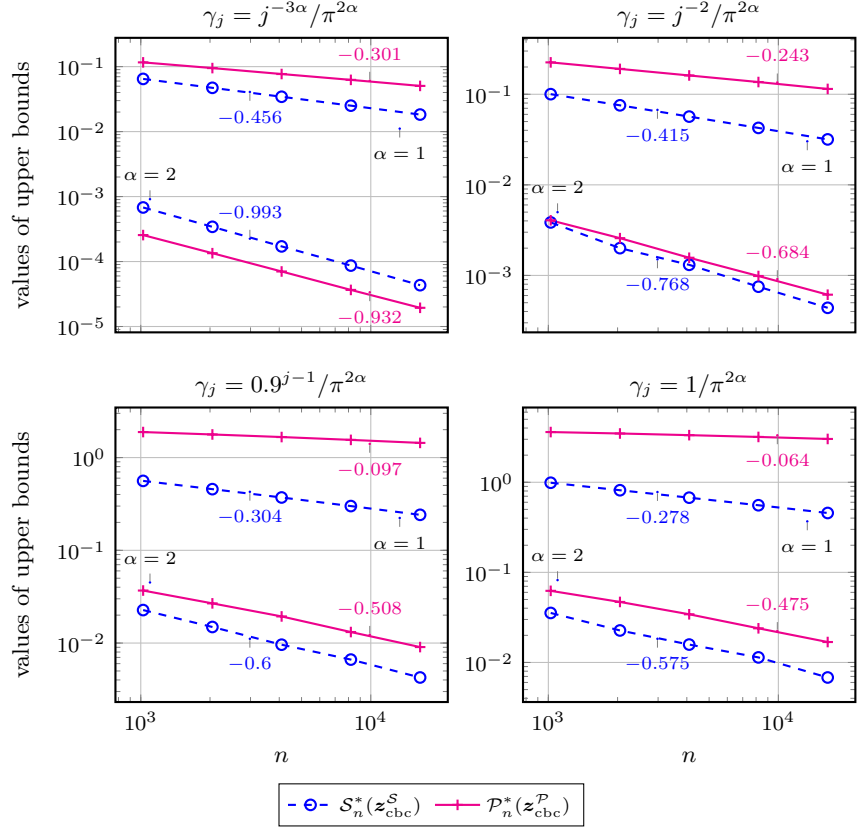

Figure~\ref{MoSrfig:pgf_num_d10} plots the values of the upper bounds against the number of points~$n = 2^m$ with $m=10,\ldots,14$ for dimension $d = 10$. We find that $\calS_n^{*}(\bsz)$ outperforms $\calP_n^*(\bsz)$ in almost all selected scenarios when $d = 10$, except for the case when $\gamma_j = j^{-3 \alpha}/\pi^{2\alpha}$ for $\alpha=2$. The negative numbers on each subplot of  Figure~\ref{MoSrfig:pgf_num_d10} indicate the convergence rates $\nu$ in $\calO(n^{-\nu + \delta})$ for $\delta>0$. We observe that $\calS_n^{*}(\bsz)$ exhibits the trend proven in \cite{CKNS20, KMN22} with $\nu$ close to $\alpha/2$, especially for fast decaying weights. However, for $\alpha = 1$, the convergence rates of $\calP_n^*(\bsz)$ perform worse than $\calS_n^{*}(\bsz)$, especially for slow decaying weights $\gamma_j = 0.9^{j-1} / \pi^{2 \alpha}$ and equal weights~$\gamma_j = 1 / \pi^{2 \alpha}$.

\begin{figure}[t]
\centering
\begin{tikzpicture}
 \begin{customlegend}[legend columns=4, legend style={at={(4.2,-6)},anchor=north, align=center},,
            legend entries={$\calS_n^{*} (\bsz_{\rm cbc}^{\calS})$ ,
                            $\calP_n^* (\bsz_{\rm cbc}^{\calP})$,
                            }]
        \addlegendimage{color= blue, dashed, mark options=solid, line width=0.8 pt,   line legend}
            \addlegendimage{color= magenta, line width=0.8 pt,   line legend} 
    \end{customlegend}
    
    \begin{groupplot}[
            group style={
        group name=my plots, 
        group size=2 by 2,
        x descriptions at=edge bottom,
        ylabels at=edge left,
        },
                footnotesize,
                width=6cm,
                height=5.5cm,
                xlabel= {$n$},
                ylabel= {values of upper bounds},
            ]

\nextgroupplot[xmode = log, ymode=log, grid = major, title = { $\gamma_j = j^{-3 \alpha} / \pi^{2 \alpha}$}]

\addplot[color=blue,  dashed,]  table {Dim/Dim_S_zScbc_2_100_invj3.txt};

\addplot[color=magenta,]  table {Dim/Dim_P_zPcbc_2_100_invj3.txt};
\node[ pin={ [pin distance=0.1cm] -90:{\scriptsize $\alpha = 1$ }}, fill=blue,circle,scale=0.1,] (aninnernode) at (4,-3.3) {};

\addplot[color=blue, dashed, ]  table{Dim/Dim_S_zScbc_4_100_invj6.txt};

\addplot[color=magenta,]  table{Dim/Dim_P_zPcbc_4_100_invj6.txt};
\node[ pin={ [pin distance=0.1cm] -90:{\scriptsize $\alpha = 2$ }}, fill=blue,circle,scale=0.1,] (aninnernode) at (4,-8.7) {};

\nextgroupplot[xmode = log, ymode=log, grid = major, title = { $\gamma_j =  j^{-2}/\pi^{2 \alpha}$}]
\addplot[color=blue, dashed,]  table {Dim/Dim_S_zScbc_2_100_invj2.txt};

\addplot[color=magenta,]  table {Dim/Dim_P_zPcbc_2_100_invj2.txt};
\node[ pin={ [pin distance=0.1cm] -90:{\scriptsize $\alpha = 1$ }}, fill=blue,circle,scale=0.1,] (aninnernode) at (4,-2.5) {};

\addplot[color=blue,  dashed]  table{Dim/Dim_S_zScbc_4_100_invj2.txt};

\addplot[color=magenta,]  table{Dim/Dim_P_zPcbc_4_100_invj2.txt};
\node[ pin={ [pin distance=0.1cm] -90:{\scriptsize $\alpha = 2$ }}, fill=blue,circle,scale=0.1,] (aninnernode) at (4,-5.5) {};

\nextgroupplot[xmode = log, ymode=log, grid = major, title = { $\gamma_j = 0.9^{j-1}/\pi^{2 \alpha}$}]
\addplot[color=blue, dashed,]  table {Dim/Dim_S_zScbc_2_100_expj9.txt};

\addplot[color=magenta,]  table {Dim/Dim_P_zPcbc_2_100_expj9.txt};
\node[ pin={ [pin distance=0.1cm] 90:{\scriptsize $\alpha = 1$ }}, fill=blue,circle,scale=0.1,] (aninnernode) at (0.5,-1.2) {};

\addplot[color=blue,  dashed, ]  table{Dim/Dim_S_zScbc_4_100_expj9.txt};

\addplot[color=magenta,]  table{Dim/Dim_P_zPcbc_4_100_expj9.txt};
\node[ pin={ [pin distance=0.1cm] -90:{\scriptsize $\alpha = 2$ }}, fill=blue,circle,scale=0.1,] (aninnernode) at (1,-8) {};

\nextgroupplot[xmode = log, ymin=1e-6, ymax = 1e7,ytick={1e-5,1e-3,1e-1,1e1,1e3,1e5,1e7}, ymode=log, grid = major, title = {$\gamma_j = 1/\pi^{2 \alpha}$}]
\addplot[color=blue,  dashed, ]  table {Dim/Dim_S_zScbc_2_100_unweighted.txt};

\addplot[color=magenta,]  table {Dim/Dim_P_zPcbc_2_100_unweighted.txt};
\node[ pin={ [pin distance=0.1cm] 90:{\scriptsize $\alpha = 1$ }}, fill=blue,circle,scale=0.1,] (aninnernode) at (0.5,-1.2) {};

\addplot[color=blue,  dashed,]  table{Dim/Dim_S_zScbc_4_100_unweighted.txt};

\addplot[color=magenta,]  table{Dim/Dim_P_zPcbc_4_100_unweighted.txt};
\node[ pin={ [pin distance=0.1cm] -90:{\scriptsize $\alpha = 2$ }}, fill=blue,circle,scale=0.1,] (aninnernode) at (1,-8) {};

\end{groupplot}

\end{tikzpicture}
\caption{The values of $\calS_n^{*} (\bsz_{\rm cbc}^{\calS})$ and $\calP_n^* (\bsz_{\rm cbc}^{\calP})$  with generating vector obtained
by respective CBC construction against $d$ with $d = 1, 2,\ldots, 100$ for different weight parameters.
The number of points is fixed as $n = 2^{10}$. }

\label{MoSrfig:pgf_dim}
\end{figure}
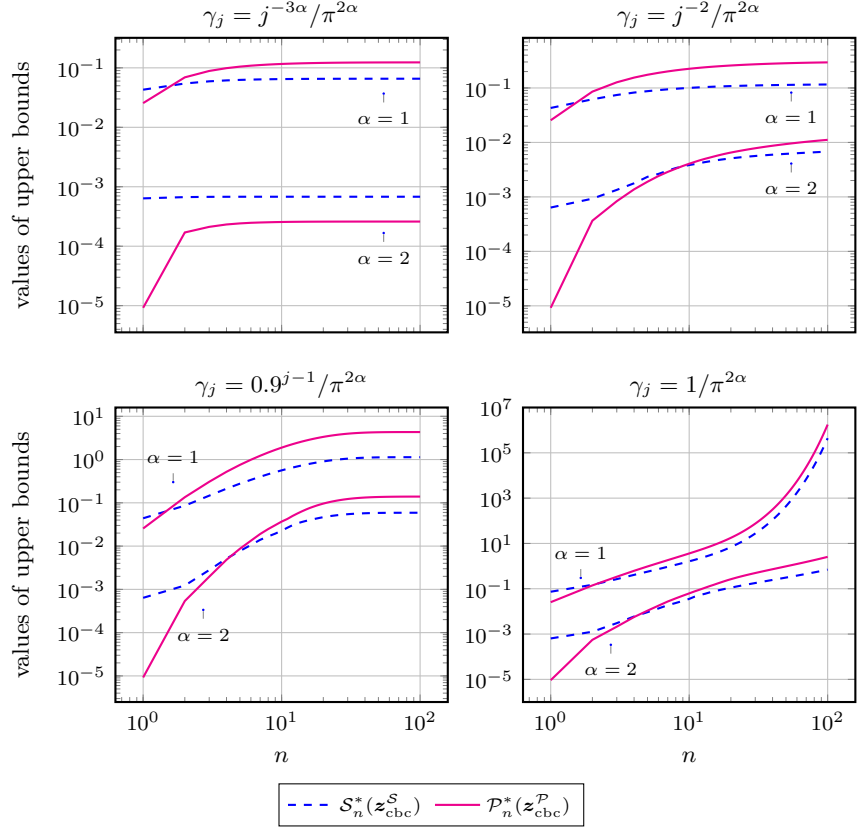

To observe how dimension may affect the performance of each metric, we plot Figure~\ref{MoSrfig:pgf_dim}. This plot shows the two upper bounds for fixed $n = 2^{10}$ as dimension increases from the first dimension till $d=100$. We see that although Figure~\ref{MoSrfig:pgf_num_d10} indicates $\calS_n^{*}(\bsz)$ performs better than $\calP_n^*(\bsz)$ in most cases, this is not the case for smaller dimensions where $\calP_n^{*}(\bsz)$ may outperform the $\calS_n^{*}(\bsz)$ criterion. For $d=10$ and $n=2^{10}$, Figure~\ref{MoSrfig:pgf_dim} is consistent with the findings of Figure~\ref{MoSrfig:pgf_num_d10}.

We plot Figure \ref{MoSrfig:pgf_comp_d10} to observe how $\calS_n^{*}(\bsz)$ performs with the generating vector $\bsz_{\rm cbc}^{\calP}$ constructed by Algorithm~\ref{MoSralg:CBC_ker_fft} and how $\calP_n^{*}(\bsz)$ performs with the generating vector $\bsz_{\rm cbc}^{\calS}$ constructed by Algorithm~\ref{MoSralg:CBC_four_fft}. 
The plot directly compares $\calP_n^{*}(\bsz)$ evaluated at both $\bsz_{\rm cbc}^{\calS}$ and $\bsz_{\rm cbc}^{\calP}$. This is also done for $\calS_n^{*}(\bsz)$.
We can see that $\calP_n^{*} (\bsz_{\rm cbc}^{\calS})$ is very similar to $\calP_n^{*} (\bsz_{\rm cbc}^{\calP})$ in all selected scenarios, while $\calS_n^{*} (\bsz_{\rm cbc}^{\calP})$ is close to $\calS_n^{*} (\bsz_{\rm cbc}^{\calS})$ for $\alpha=1$ but not as close for $\alpha=2$ with fast decaying weights. We find that even if we construct the lattice using $\calS_n^*(\bsz)$, the error when measured against the $\calP_n^*(\bsz)$ criterion is small.

\pgfplotsset{ tick label style={font=\small}, label style={font=\small},
legend style={font=\scriptsize}, every axis/.append style={ line
width=0.8 pt, tick style={line width=0.3pt}} }
\begin{figure}[t]
    \centering
    \begin{tikzpicture}

    \begin{customlegend}[legend columns=4, legend style={at={(4.2,-6)},anchor=north, align=center},,
            legend entries={$\calS_n^{*} (\bsz_{\rm cbc}^{\calS})$  ,
        				$\calP_n^*(\bsz_{\rm cbc}^{\calS})$,    
				$\calS_n^{*} (\bsz_{\rm cbc}^{\calP})$ , 
				$\calP_n^* (\bsz_{\rm cbc}^{\calP})$ ,         
		              }]
        \addlegendimage{color= blue, dashed, mark options=solid, line width=0.3 pt,   line legend}
            \addlegendimage{color= white, mark options={color=teal}, mark=o, line width= 0.8pt, line legend} 
            \addlegendimage{color= white, mark=+,  mark options={color=red, mark size=2},line width=0.8pt,  line legend}
            \addlegendimage{color= magenta, line width = 0.3pt, line legend }
    \end{customlegend}
    
    \begin{groupplot}[
            group style={ 
        group name=my plots, 
        group size=2 by 2,
        x descriptions at=edge bottom,
        ylabels at=edge left,
        },
                footnotesize,
                width=6cm,
                height=5.5cm,
                xlabel= {$n$},
              ylabel= {values of upper bounds},
            ]       
       \nextgroupplot[xmode = log, ymode=log, grid = major, title = { $\gamma_j = j^{-3 \alpha} / \pi^{2 \alpha}$}]

	\addplot[color=blue , dashed, mark options=solid, thin] table{Num/Num_S_zScbc_2_10_invj3.txt};

	\addplot[color=red, mark=+, dashed, mark options=solid,only marks] table{Num/Num_S_zPcbc_2_10_invj3.txt};

	\addplot[color=magenta,thin]  table {Num/Num_P_zPcbc_2_10_invj3.txt};
		
	\addplot[color= teal , mark=o, only marks]  table {Num/Num_P_zScbc_2_10_invj3.txt}; 
	\node[ pin={ [pin distance=0.1cm] -90:{\scriptsize $\alpha = 1$ }}, fill=blue,circle,scale=0.1,] (aninnernode) at (9.5,-4.5) {};
 
	\addplot[color=blue, dashed, mark options=solid, thin]  table{Num/Num_S_zScbc_4_10_invj6.txt};

	\addplot[color=red, mark=+, dashed, mark options=solid ,only marks]  table{Num/Num_S_zPcbc_4_10_invj6.txt};

	\addplot[color=magenta, thin]  table{Num/Num_P_zPcbc_4_10_invj6.txt};
	
	\addplot[color= teal  , mark=o, only marks]  table{Num/Num_P_zScbc_4_10_invj6.txt};
	\node[ pin={ [pin distance=0.1cm] 90:{\scriptsize $\alpha = 2$ }}, fill=blue,circle,scale=0.1,] (aninnernode) at (7,-6.5) {};

	\nextgroupplot[xmode = log, ymode=log, grid = major, title = { $\gamma_j =  j^{-2}/\pi^{2 \alpha}$}]
	
	\addplot[color=blue,  dashed, mark options=solid,thin]  table {Num/Num_S_zScbc_2_10_invj2.txt};
	
	\addplot[color=red, mark=+,  dashed, mark options=solid,only marks]  table {Num/Num_S_zPcbc_2_10_invj2.txt};

	\addplot[color=magenta, thin]  table {Num/Num_P_zPcbc_2_10_invj2.txt};
	
	\addplot[color= teal ,mark=o,only marks]  table {Num/Num_P_zScbc_2_10_invj2.txt};
	\node[ pin={ [pin distance=0.1cm] -90:{\scriptsize $\alpha = 1$ }}, fill=blue,circle,scale=0.1,] (aninnernode) at (9.5,-3.5) {};

	\addplot[color=blue,  dashed, mark options=solid,thin]  table{Num/Num_S_zScbc_4_10_invj2.txt};
	
	\addplot[color=red, mark=+,  dashed, mark options=solid,only marks]  table{Num/Num_S_zPcbc_4_10_invj2.txt};

	\addplot[color=magenta, thin]  table{Num/Num_P_zPcbc_4_10_invj2.txt};
	
	\addplot[color= teal ,mark=o,only marks]  table{Num/Num_P_zScbc_4_10_invj2.txt};
	\node[ pin={ [pin distance=0.1cm] 90:{\scriptsize $\alpha = 2$ }}, fill=blue,circle,scale=0.1,] (aninnernode) at (7,-5) {};

	\nextgroupplot[xmode = log, ymode=log, grid = major, title = { $\gamma_j = 0.9^{j-1}/\pi^{2 \alpha}$}]
	
	\addplot[color=blue,   dashed, mark options=solid,thin]  table {Num/Num_S_zScbc_2_10_expj9.txt};
	
	\addplot[color=red, mark=+,  dashed, mark options=solid,only marks]  table {Num/Num_S_zPcbc_2_10_expj9.txt};

	\addplot[color=magenta,thin]  table {Num/Num_P_zPcbc_2_10_expj9.txt};

	\addplot[color= teal  ,mark=o,only marks]  table {Num/Num_P_zScbc_2_10_expj9.txt};
	\node[ pin={ [pin distance=0.1cm] -90:{\scriptsize $\alpha = 1$ }}, fill=blue,circle,scale=0.1,] (aninnernode) at (9.5,-1.5) {};

	\addplot[color=blue,  dashed, mark options=solid,thin]  table{Num/Num_S_zScbc_4_10_expj9.txt};
	
	\addplot[color=red, mark=+,  dashed, mark options=solid,only marks]  table{Num/Num_S_zPcbc_4_10_expj9.txt};

	\addplot[color=magenta,thin]  table{Num/Num_P_zPcbc_4_10_expj9.txt};
	
	\addplot[color= teal  ,mark=o, only marks]  table{Num/Num_P_zScbc_4_10_expj9.txt};
	\node[ pin={ [pin distance=0.1cm] 90:{\scriptsize $\alpha = 2$ }}, fill=blue,circle,scale=0.1,] (aninnernode) at (7,-3) {};

	\nextgroupplot[xmode = log, ymode=log, grid = major, title = {$\gamma_j = 1/\pi^{2 \alpha}$}]
	
	\addplot[color=blue,   dashed, mark options=solid,thin]  table {Num/Num_S_zScbc_2_10_unweighted.txt};

	\addplot[color=red,mark=+,  dashed, mark options=solid,only marks]  table {Num/Num_S_zPcbc_2_10_unweighted.txt};

	\addplot[color=magenta,thin]  table {Num/Num_P_zPcbc_2_10_unweighted.txt};

	\addplot[color= teal  ,mark=o,only marks]  table {Num/Num_P_zScbc_2_10_unweighted.txt};
	\node[ pin={ [pin distance=0.1cm] -90:{\scriptsize $\alpha = 1$ }}, fill=blue,circle,scale=0.1,] (aninnernode) at (9.5,-1) {};

	\addplot[color=blue, dashed,thin]  table{Num/Num_S_zScbc_4_10_unweighted.txt};

	\addplot[color=red, mark=+,  dashed, mark options=solid,only marks]  table{Num/Num_S_zPcbc_4_10_unweighted.txt};

	\addplot[color=magenta,thin]  table{Num/Num_P_zPcbc_4_10_unweighted.txt};

	\addplot[color= teal  ,mark=o,  only marks]  table{Num/Num_P_zScbc_4_10_unweighted.txt};
	\node[ pin={ [pin distance=0.1cm] 90:{\scriptsize $\alpha = 2$ }}, fill=blue,circle,scale=0.1,] (aninnernode) at (7,-2.5) {};
	
    \end{groupplot}

    \end{tikzpicture}
    \caption{The values of $\calS_n^{*} (\bsz_{\rm cbc}^{\calS})$, $\calP_n^{*} (\bsz_{\rm cbc}^{\calS})$, $\calS_n^{*} (\bsz_{\rm cbc}^{\calP})$,
and $\calP_n^* (\bsz_{\rm cbc}^{\calP})$  with $\bsz_{\rm cbc}^{\calS}$ and $\bsz_{\rm cbc}^{\calP}$ obtained
    from their respective CBC constructions for different weight parameters for $n = 2^{10}, 2^{11}, \ldots, 2^{14}$.
    The dimension is set to be $d=10$.}
    \label{MoSrfig:pgf_comp_d10}
\end{figure}

The overall computation cost of fast CBC construction using $\calS_n^{*}(\bsz)$ as the search criterion for approximation with product weights is $\calO(d n \log n)$~\cite{CKNS21}, while the computation cost of CBC construction using $\calP_n^*(\bsz)$ is $\calO(d n^2 \log n)$, thus it is more efficient to construct a lattice using $\calS^*_n(\bsz)$.

For large $n$ (especially if $\alpha>1$)  the entries of the matrix~$\calM$, appearing in the calculation of $\calP^*_n(\bsz)$, start to become very close to 1, resulting in an ill-conditioned matrix. Thus, double precision does not provide a sufficient level of accuracy, and computation in arbitrary precision becomes necessary to accurately compute eigenvalues, leading to increased computation time. Such issues only occur for much larger $n$ when computing with $\calS_n^{*}(\bsz)$.

We conclude that $\calS_n^{*}(\bsz)$ is the more useful choice for generating lattice-based algorithms for multivariate approximation. From an implementation point of view, we see that computing with $\calS_n^{*}(\bsz)$ is more efficient. We can make use of the fast CBC Algorithm \ref{MoSralg:CBC_four_fft} for computing $\calS^*_n(\bsz)$ which has a smaller computational cost compared to Algorithm \ref{MoSralg:CBC_ker_fft} using $\calP_n^{*}(\bsz)$ as the search criterion. In addition, $\calS_n^{*}(\bsz)$ can be computed using double-precision for choices of $n$ where arbitrary precision is necessary for computing $\calP_n^{*}(\bsz)$. From a numerical point of view, we also see that for the cases tested there is only a small difference in the worst-case error upper bound  for the lattice generated using $\calS_n^{*}(\bsz)$ even when measured by $\calP_n^{*}(\bsz)$.
\newline

\textbf{Acknowledgements}\quad We would like to acknowledge the support from the Australian Research Council (DP21010083) and the Research Foundation Flanders (FWO G091920N).  We would also like to extend our thanks to Vesa Kaarnioja and Ronald Cools for their insights.

\end{document}